\title{Domino shuffling on Novak half-hexagons and Aztec half-diamonds}
\author{Eric Nordenstam}
\address{%
Fakult\"at f\"ur Mathematik, University of Vienna, 
Nordbergstra\ss e 15, 1090 Wien, Austria }
\email{eric.nordenstam@univie.ac.at}
\thanks{%
Nordenstam was supported by the Austrian Science Foundation FWF, 
grant Z130-N13.
}
\author{Benjamin Young}
\address{%
Department of Mathematics, KTH
100 88 Stockholm, Sweden
}
\email{benyoung@math.kth.se}
\thanks{
Young was supported by grant KAW 2010.0063 from the Knut and Alice Wallenberg
Foundation.
}
\date{\today}
\newcommand{\Ent}{\text{Ent}}
\newcommand{\ent}{\textup{ent}}
\newcommand{\LocEnt}[1]{\ent
\left(\frac{\partial #1}{\partial x},
\frac{\partial #1}{\partial y}\right)}
\newcommand{\area}{\textup{area}}
\newcommand{\comment}[1]{\ifthenelse{\boolean{dum}}{
{\par\noindent\Huge\ding{46}} \fbox{\parbox{10cm}{#1}}\par}{}}
\newcommand{\bbC}{\mathbb{C}}
\newcommand{\bbR}{\mathbb{R}}
\newcommand{\bbZ}{\mathbb{Z}}
\newcommand{\bbN}{\mathbb{N}}
\newcommand{\AD}{\text{\textsf{AD}}}
\newcommand{\HH}{\text{\textsf{HH}}}
\newcommand{\LT}{\text{\textsf{LT}}}
\newcommand{\NILP}{\text{\textsf{NILP}}}
\newcommand{\IPP}{\text{\textsf{IPP}}}
\newcommand{\ST}{\text{\textsf{ST}}}
\newtheorem{theorem}{Theorem}[section]
\newtheorem{proposition}[theorem]{Proposition}
\newtheorem{corollary}[theorem]{Corollary}
\newtheorem{lemma}[theorem]{Lemma}
\newtheorem{definition}[theorem]{Definition}
\newtheorem{algorithm}[theorem]{Algorithm}
\begin{document}
\bibliographystyle{alpha}

\comment{
$ $Id: halfhexagon.tex 63 2011-03-25 18:07:27Z benyoung $ $
}

\maketitle

\begin{abstract}
We explore the connections between the well-studied Aztec Diamond graphs and a new family of graphs called the Half-Hexagons, discovered by Jonathan Novak.  In particular, both families of graphs have very simple domino shuffling algorithms, which turn out to be intimately related.   This connection allows us to prove an ``arctic parabola'' theorem for the Half-Hexagons as a corollary of the Arctic Circle theorem for the Aztec Diamond.
\end{abstract}
\section{Introduction}
In their groundbreaking paper~\cite{gessel-viennot} gave a method for counting families of non-intersecting lattice paths between two equinumerous sets of points.   In their first example, the paths are between 
\[(0, -x_i)\text{ and }(i, -i), \;1 \leq i \leq n\]and are composed of unit-length steps up or to the right only.  
The number of such lattice paths is a Vandermonde determinant:
\begin{equation}
\label{eqn:vandermonde}
\det \left[\binom{x_i}{j}\right]_{1 \leq i,j \leq n}
= \prod_{1 \leq i < j \leq n} \frac{x_j-x_i}{j-i}.
\end{equation}
Indeed, these ideas were already present in the works of Lindstr\"om~\cite{lindstrom} and Karlin-McGregor~\cite{karlin-mcgregor}; they are applicable far beyond the scope of these papers and form the enumerative-combinatorial cornerstone for many areas of modern mathematics.

We shall focus on a specific case of the above example of Gessel-Viennot.
Jonathan Novak pointed out to us that when $x_i = 2i$, then the number of these paths is $2^{n(n+1)/2}$, which is the same as the number of domino tilings of an Aztec diamond~\cite{eklp}; he asked us for a bijection.  We didn't find one, but we did find many amazing similarities between these two models.  Namely, they have similar \emph{domino shuffles}, and similar \emph{limit laws}.  A little further detective work turned up a family of subgraphs of the Aztec diamond, which we call \emph{Aztec half-diamonds}, whose domino shuffling algorithm is \emph{identical}, in a certain sense, to that on the half-hexagon. 
Our Aztec half-diamonds are similar but not identical to the half Aztec diamond
of~\cite{FlFo11}.

\begin{figure}
\includegraphics[height=1in]{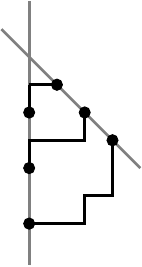} 
\includegraphics[height=1.3in, angle=90]{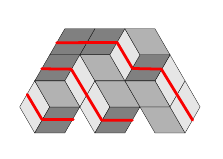} 
\!\!\!  \!\!\!  \!\!\!
\includegraphics[height=1.3in, angle=90]{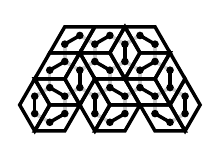} 
\!\!\!  \!\!\!  \!\!\!
\includegraphics[height=1.3in, angle=90]{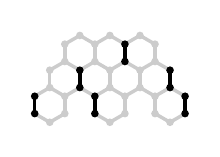} 
\!\!\!  \!\!\!  \!\!\!
\raisebox{0.6in}{
\begin{minipage}{1in}
\[
\begin{matrix}
3\\
2&5\\
1&3&6\\
1&3&5&7 
\end{matrix}
\]
\end{minipage}
}
\caption{Five models which are in bijection: Non-intersecting lattice paths, pile of boxes, perfect matching and dual tiling, interlacing particle process, Staircase semistandard Young Tableau.  All of these are order 3.
\label{fig:bijections}}
\end{figure}

\begin{figure}
\includegraphics[width=5in]{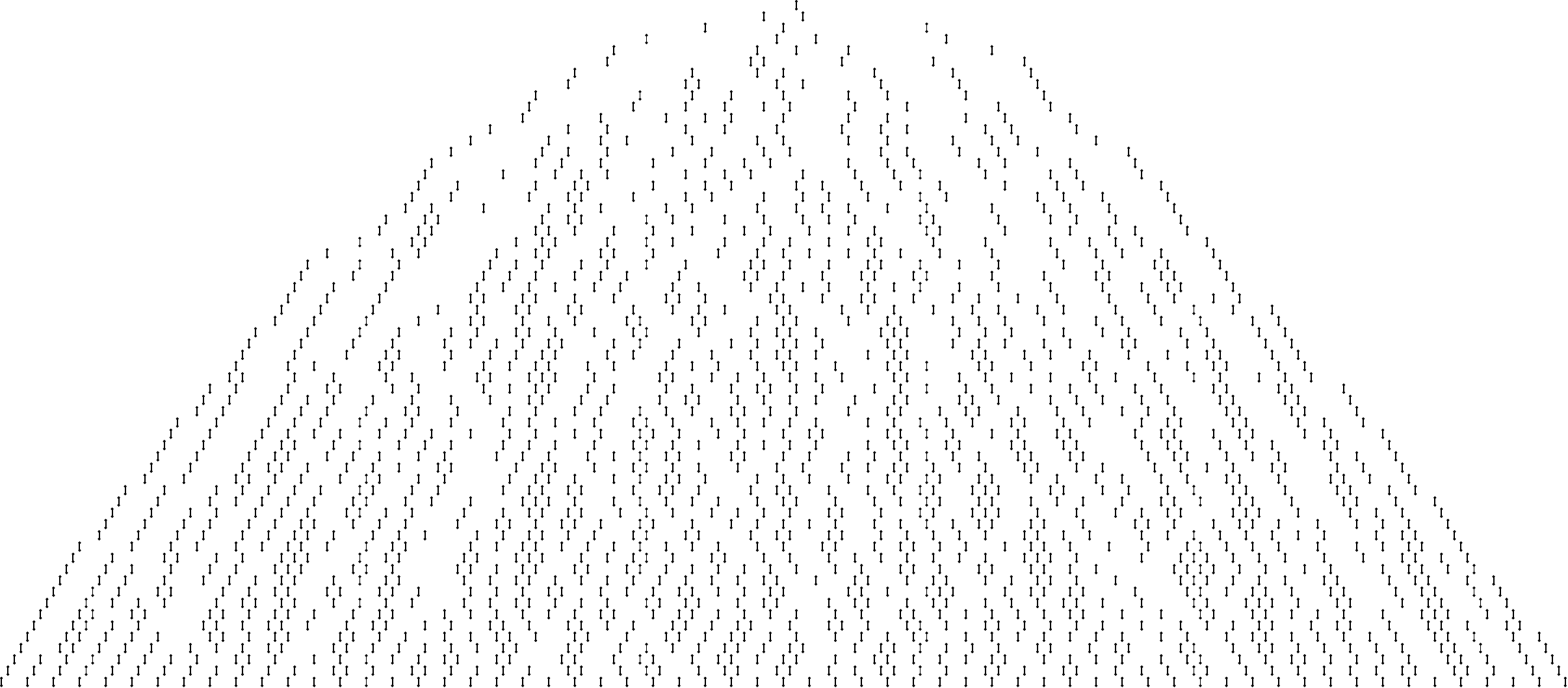}
\caption{Order 100 half-hexagon, as an interlacing particle process
\label{figorder100}}
\end{figure}

We wish to thank Jonathan Novak for bringing this problem to our attention, as well as our colleagues Alexei Borodin, Dan Romik, for helpful conversations.  This paper began at the 2010 program in Random Matrix Theory, Integrable Systems and Interacting Particle Processes at the Mathematical Sciences Research Institute in Berkeley, California; it was completed while B. Young was visiting Universitat Wien. 

Several similar problems have been considered in the vast and ever-growing literature on the dimer problem, nonintersecting lattice paths, and the like.  We mention some of them here.
\begin{itemize}
\item Okounkov and Kenyon~\cite{kenyon-okounkov} calculate limiting shapes for dimer models on portions of the hexagonal grid, under ``polygonal'' boundary conditions; they show that for a generic polygonal boundary with $3d$ sides in which the edges appear in cyclic order, the limiting shape is an algebraic curve of degree $d$.  They comment that some of these conditions can be relaxed; however it is not clear to us how to use these methods to handle the erratic bottom boundary of our half-hexagons.   
\item Di Francesco and Reshetikin~\cite{difrancesco-reshetikhin} study similar half-hexagonal shapes, but in which the long boundary is free; they obtain a variety of different limit shapes, none of which is the same as ours.
\item Borodin and Ferrari~\cite{borodin-ferrari} have a very general framework for studying dynamics on interlacing particle processes, including the Aztec diamond domino shuffle and many others.  Though they do not handle this particular case, our model does fit into this framework; we shall hopefully carry out their analysis in a subsequent paper.
\end{itemize}

\section{Bijective Combinatorics}
\label{sec:bijective combinatorics}
The Gessel-Viennot lattice path model is in bijection with a number of other combinatorial structures.  Some of these bijections are ``folklore'' and all are well-known, but it is important to state briefly what they are, in order to establish terminology.  

For the remainder of this section, fix the \emph{order} $n \in \bbZ_{\geq 0}$.

\subsection{Non-intersecting lattice paths}
Let $\NILP(n)$ be the set of families of non-intersecting lattice paths which begin at the points $(0,-2i)$ and end at the points $(i,-i)$, composed of steps of unit length in the directions of increasing $x$ and $y$.  We get this from the example of~\cite{gessel-viennot} by taking $x_i = 2i$.  
The $n$th path from the top is of length $2n$.

As mentioned above, it is easy to enumerate these families of paths using the method of Gessel-Viennot; we shall do this in Section~\ref{sec:enumerativecombinatoricslatticepath}.

\subsection{Lozenge tilings} 

It is well known (see, for example,~\cite{GeVi89,Joh05b}), that a family of 
non-intersecting lattice paths on the square lattice, with fixed start and end 
points, is in bijection with \emph{lozenge tilings} of a certain region of the 
triangular lattice.  Here, a lozenge is a parallelogram composed of two 
adjacent equilateral triangles.  The boundary of the region depends only upon 
the locations of the endpoints.  

\begin{definition}
The regular triangular lattice $\mathcal{L}$ is the infinite planar graph 
whose vertices are the integer span of the vectors
\begin{align*}
v &= \left[\begin{matrix} 1 \\ 0 \end{matrix}\right] &
w &= \frac{1}{2}\left[\begin{matrix} 1 \\ \sqrt{3} \end{matrix}\right]
\end{align*}
and which has edges joining any two vertices which are unit distance apart.  $L$ subdivides $\bbR^2$ into unit equilateral triangles.

Let $R_n \subseteq \bbR^2$ be the union of the large trapezoid with corners
\begin{align*}
\{ nv,  nw, n(w - v), -nv \} 
\end{align*}
and the $n$ small trapezoids with corners
\begin{equation*}
\left\{ \left.
\begin{matrix}
\shoveleft{(2i-n+1)v + v, } \\
\shoveleft{(2i-n+1)v - v,  }\\
\shoveleft{(2i-n+1)v - w,}  \\
\shoveleft{(2i-n+1)v + v - w}
\end{matrix}
\; \right| \;
\text{$i=0$, \dots, $n-1$} \right\}.
\end{equation*}
\end{definition}
See Figure~\ref{fig:bijections}, pictures 2 and 3, for a graphical representation of $R_3$.
Let $\LT(n)$ be the set of lozenge tilings of $R_n$.  To obtain an element of $\LT(n)$ given an element of $\NILP(n)$, first apply the affine transformation which takes the points
\begin{align*}
(0,-2i) &\longmapsto \left(-n  + 2i\right)v - \textstyle\frac{1}{2}w, \\
(i,-i) &\longmapsto -nv + \left(i - \textstyle\frac{1}{2}\right)w.
\end{align*}
Now the lattice steps are in directions $-v$ and $w-v$; each step begins and ends on the boundary of a triangle in $\mathcal{L}$ and traverses two of the triangles of $L$.  Form a partial tiling by placing the corresponding lozenge over each step.  The holes in this tiling can be covered uniquely by lozenges as well.

Observe that, in drawing this tiling, we have also drawn a pile of cubical boxes: each tile represents a visible face of a cube with faces parallel to the coordinate planes, viewed isometrically from the direction (1,1,1).

\subsection{Perfect matchings on a half-hexagon}
Let $\mathcal{L}^{\vee}$ denote the planar dual of $\mathcal{L}$. $\mathcal{L}^{\vee}$ is the regular tiling of the plane with hexagons, sometimes called the \emph{honeycomb mesh, grid} or \emph{lattice}.
\begin{definition}
Let $R^{\vee}_n$ be the subgraph of $\mathcal{L}^{\vee}$ induced by those vertices which lie within $R$.  We call $R^{\vee}_n$ the \emph{half-hexagon graph}.
Let $\HH(n)$ denote the set of perfect matchings of $R^{\vee}_n$.
\end{definition}

There is a folklore bijection between $\HH(n)$ and $\LT(n)$.  Suppose we are given an element $T$ of $\LT(n)$.  Each lozenge in the tiling $T$ is composed of two triangles of $\mathcal{L}$, which are dual to two vertices in $\mathcal{L}^\vee$.  Join every such pair of vertices with an edge to obtain a matching in $\HH(n)$.  The resulting matching is \emph{perfect} (i.e. that every vertex is covered) by virtue of the fact that a tiling in $\LT(n)$ covers $R(n)$ completely.    

\subsection{Interlacing particle process}
\label{sec:ipp}

Observe that the vertical edges of $R^{\vee}_n$ are centered at the points   

\begin{align*}
\left\{ \left.
\left(-n - \frac{1}{2} + j \right)v + (n-i)w 
\; \right| \;
\text{$i=1$, \dots, $n$;\; $j=1$, \dots, $i+n$}
\right\}.
\end{align*}

In the above set, we call $i$ the \emph{row index} and $j$ the \emph{position}. 
Let $\pi \in \HH(n)$.  Observe that the vertical edges in $\pi$ determine $\pi$ completely, subject to the following \emph{interlacing condition}:  if there are edges in positions $j$ and $j'$ of row $i$, then there must be an edge in some position $j''$ of row $i-1$, with $j \leq j'' < j$.  Indeed, any collection of vertical edges which interlace in this manner determine a tiling in $\HH(n)$.

From $\pi$, then, we may construct an \emph{interlacing particle process}
\[
  \left\{ (i,j) \;\left|\; 
(-n -\textstyle \frac{1}{2} + j )v + (n-i)w 
\text{ is the center of a vertical edge in } \pi\right.\right\} \subset \bbN^2.
\]
Let $\IPP(n)$ denote the set of all such interlacing particle processes.

\subsection{Staircase tableaux}

Let $P$ be an element of $\IPP(n)$.  Define the numbers $g_{ij}$ by 
\begin{align*}
P &= \bigcup_{i=0}^{n-1} \{ (i, g_{ij})\;|\; 0 \leq j \leq i-1 \}
\end{align*}
In other words, $g_{ij}$ is the position of the $j$th vertical edge in row $i$ of the corresponding perfect matching in $HH(n)$.  The interlacing conditions imply that
\[
g_{ij} \leq g_{i-1,j} < g_{i,j+1},
\]
in other words, that $(g_{ij})$ is a \emph{Semistandard Young tableau of staircase shape}~\cite{ECII} with bottom row equal to $(1,3,5,\ldots, 2n+1)$.  We call these objects \emph{Staircase tableaux} for short, and we write the set of all such as $\ST(n)$.  Note also that the numbers 
\[
h_{ij} = g_{ij} - j
\]
form a Gelfand-Tsetlin pattern (see~\cite[(7.37)]{ECII}) with bottom row equal to $(0,1,2,\ldots,n)$.

\section{Enumerative Combinatorics}
\label{sec:enumerativecombinatorics}
There are at least two easy ways to enumerate $\HH(n)$ by evaluating determinants.  We shall introduce a third way as a consequence of the shuffling algorithm.  Doubtless there are many others.

\subsection{Non-intersecting lattice path enumeration}
\label{sec:enumerativecombinatoricslatticepath}
Gessel-Viennot~\cite{gessel-viennot} handle a slightly more general situation. 
Observe that there are $\binom{x_i}{j}$ up-right lattice paths from the point 
$(0,-x_i)$ to $(j,-j)$.  Consequently, given $n$ integers 
$x_1 < x_2 < \cdots < x_n$, the number of families of nonintersecting lattice 
paths from $\{(0,-x_i\}$ to $\{(j,-j)\}$ is given by~\eqref{eqn:vandermonde}.
This determinant evaluation is given in~\cite{gessel-viennot}; it works because 
it is essentially a Vandermonde determinant.  Putting $x_i=2i$, we recover the 
endpoints of the paths for $\NILP(n)$, and we obtain
\[
|\NILP(n)| = 2^{n(n+1)/2}.
\]

\subsection{Staircase tableau enumeration}

An alternate easy enumeration, this time of $\ST(n)$, goes through symmetric function theory.  First we observe~\cite[(7.37)]{ECII} that because of the Gelfand-Tsetlin pattern interpretation, we have
\[
| \ST(n) | = s_\lambda(1, \ldots, 1)
\]
where $s_{\lambda}$ is a Schur function with $n+1$ arguments and $\lambda$ is the ``staircase partition'' $(1,2,\cdots,n+1)$.  It is a consequence of the classical bialternant definition of the Schur function (see~\cite[Chapter 7.15 and ex. 7.30]{ECII} that 
\[
s_{\lambda}(x_1, \ldots, x_{n+1}) = \prod_{1 \leq i < j \leq n}(x_i + x_j).
\]
Putting all $x_i$ equal to 1 gives $|\ST(n)|=2^{n(n+1)/2}$ as before.  Indeed, performing the principal specialization $x_i \mapsto q^i$ gives a $q$-enumeration of $\HH(n)$, in which each element $\pi$ of $\HH(n)$ is assigned weight proportional to $q^{\text{vol}(\pi)}$.  Here, $\text{vol}(\pi)$ denotes the integral of the \emph{height function} of $\pi$.  See~\cite{kenyon-dimer-introduction} for an introduction to the height functions of dimer models.

\section{Dynamics}
\subsection{The half-hexagon shuffle}
In this section, we will define dynamics on interlacing particles, called the \emph{half-hexagon domino shuffle}, which takes the form of a random map from $\HH(n)$ to $\HH(n+1)$.  The procedure is easiest to describe and implement on the staircase tableaux $\ST(n)$, and it is easiest to illustrate on the half-hexagon perfect matchings $\HH(n)$ (see Figure~\ref{fig:hh_shuffle}).

The half-hexagon domino shuffle constructs a perfect matching $\pi'$ on $R^{\vee}(n+1)$ from $\pi$.  It is not a deterministic process: one has to make a series of fair coin tosses to determine $\pi'$.  As it turns out, these coin tosses provide a direct explanation of the fact that the cardinality of $HH(n)$ is a power of two.

\begin{figure}
\begin{center}
\includegraphics[angle=90,height=2in]{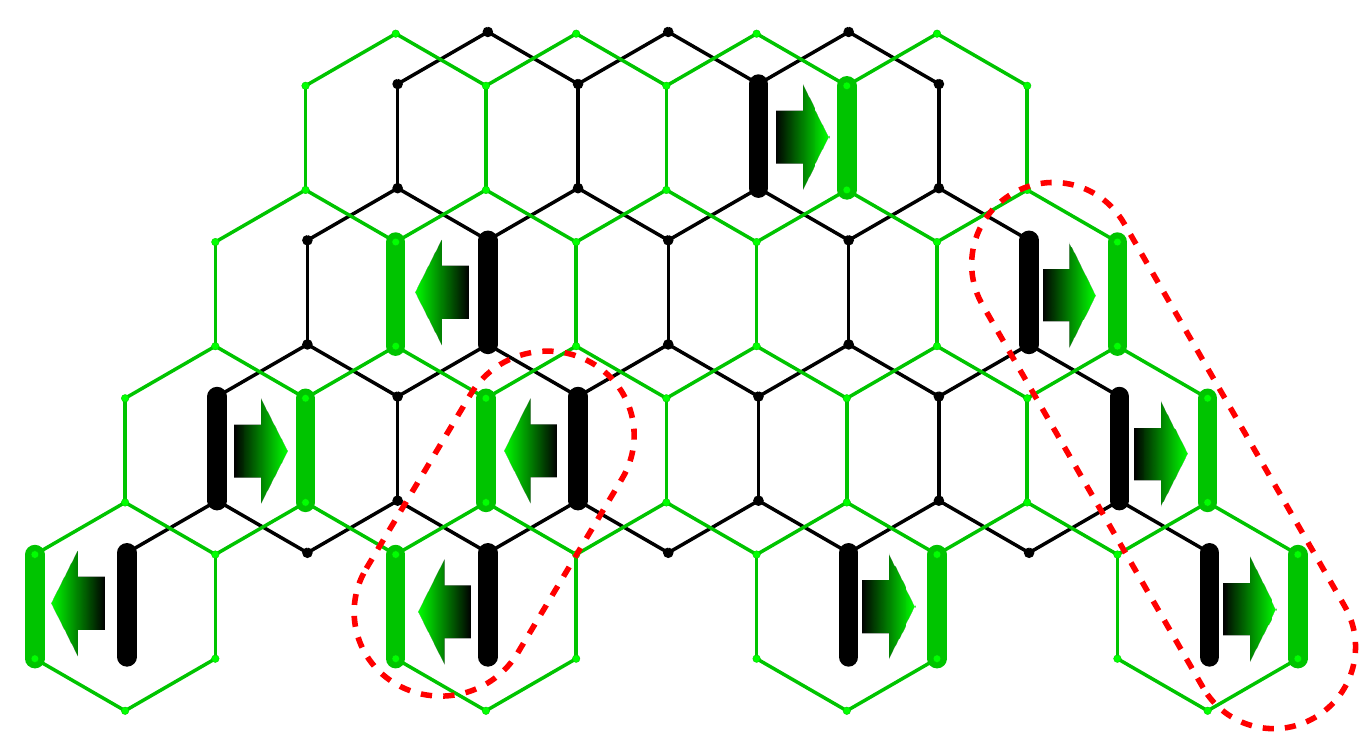}
\end{center}
\caption{Domino shuffling on the half-hexagon.  Groups of edges which are forced to move together, in order to maintain the interlacing conditions, are circled.\label{fig:hh_shuffle}}
\end{figure}

\begin{algorithm} 
\label{alg:hh_shuffle}
(Domino shuffling for the half-hexagon)
\begin{ntabbing}
123\=123\=123\=\kill
Input:  \\
\> ($g_{ij}$), a staircase tableau in $\ST(n)$, \\
\> ($\xi_{ij}$), independent Bernoulli 0-1 variables.\\ \\
Output:  \\
\> ($h_{ij}$), a staircase tableau in $\ST(n+1)$. \\ \\
\label{} for $i$ from $0$ to $n$: \\
\label{} \> for $j$ from $0$ to $i$: \\
\label{} \> \> if $j<i$ and $g_{i,j} = h_{i-1, j}$ then: \\
\label{step:forced1} \> \> \> $h_{i,j} \leftarrow g_{i, j}$    \\
\label{} \> \> else if $j>0$ and $g_{i,j} = h_{i-1, j-1}$ then: \\
\label{step:forced2} \> \> \> $h_{i,j} \leftarrow g_{i, j} + 1$ \\
\label{} \> \> else  \\
\label{step:free1} \> \> \> $h_{i,j} \leftarrow g_{i, j} + \xi_{i,j}$ \\
\label{algstep:augment1} for $j$ from 0 to n: \\
\label{algstep:augment2} \> $h_{n+1,j} = 2j + 1$
\end{ntabbing}
\end{algorithm}

It is easier to illustrate this algorithm acting on the half-hexagon (see Figure~\ref{fig:hh_shuffle}) though slightly harder to describe.
Begin with a perfect matching $\pi$ on $HH(n)$ (specified by its vertical edges).  Observe that it is possible to add, deterministically, a row of $n+1$ vertical edges, appearing every second edge, to the bottom of $\mathcal{L}^{\vee}$, in such a way as to maintain the interlacing condition.  Further, it is possible to superimpose the next-larger half-hexagon $R^{\vee}(n+1)$ on this graph, in such a way that each vertical edge is in the center of a hexagon of $R^{\vee}(n+1)$.

Working from the top of the graph to the bottom, each vertical edge in $\pi$ jumps either left or right onto the nearest vertical edge in the same row of $R^{\vee}(n+1)$.  These jumps happen independently at random, according to the result of a fair coin toss, with the following exceptions: 
\begin{itemize}
\item if moving an edge left would violate the interlacing condition with the new row above, then the edge moves \emph{right} with probability 1.
\item if moving an edge right would violate the interlacing condition with the new row above, then the edge moves \emph{left} with probability 1.
\end{itemize}

We invite the reader to check the preceding procedure is the same as Algorithm~\ref{alg:hh_shuffle}.

One does need to check that the output of Algorithm~\ref{alg:hh_shuffle} is always a staircase tableau, i.e. to verify that the interlacing conditions hold.  This is done inductively on the row $i$, together with a checking that the deterministic row $n+1$ interlaces with row $n$.

\subsection{Preserving the uniform distribution}

In this section, we argue that the domino shuffle described above preserves the uniform distribution (using the terminology of building rules).
We first introduce the \emph{time reversal} of Algorithm~\ref{alg:hh_shuffle}.

\begin{algorithm} (Time-reversed domino shuffle)
\label{alg:hh_reverse_shuffle}
(Time-reversed domino shuffling for the half-hexagon) 
\begin{ntabbing}
123\=123\=123\=\kill
Input:  \\
\> ($g_{ij}$), a staircase tableau in $\ST(n+1)$, \\
\> ($\xi_{ij}$), independent Bernoulli 0-1 variables.\\ \\
Output:  \\
\> ($h_{ij}$), a staircase tableau in $\ST(n)$. \\ \\
\label{} for $j$ from 0 to n: \\
\label{} \> $h_{n,j} = 2j+1 $ \\
\label{} for $i$ from $n-1$ down to $0$: \\
\label{} \> for $j$ from $0$ to $i$: \\
\label{} \> \> if $g_{i,j} = h_{i+1, j}$ then: \\
\label{step:forced3} \> \> \> $h_{i,j} \leftarrow g_{i, j}$    \\
\label{} \> \> else if $g_{i,j} = h_{i+1, j+1}$ then: \\
\label{step:forced4} \> \> \> $h_{i,j} \leftarrow g_{i, j}-1$ \\
\label{} \> \> else  \\
\label{step:free2} \> \> \> $h_{i,j} \leftarrow g_{i, j} - \xi_{i,j}$ \\
\end{ntabbing}
\end{algorithm}

On the half hexagon, this algorithm does the following: the $(n+1)$st row of vertical edges is dropped altogether; the $n$th row jumps deterministically to positions 0, 2, \dots, $n$.  Then, working from the bottom to the top, edges jump left or right with probability $\frac{1}{2}$, except that edges are sometimes forced to move in order to interlace with edges in the row below.

\begin{definition}
Let $\bbC \HH(n)$ denote the vector space whose orthonormal basis is indexed by the elements of $\HH(n)$.  
Let $\langle \cdot, \cdot\rangle_n$ denote the inner product which makes this an orthonormal basis.

Let $P(\pi \rightarrow \pi')$ be the probability that Algorithm~\ref{alg:hh_shuffle} produces output $\pi'$ when given input $\pi$.

Let $\psi: \bbC \HH(n) \rightarrow C\HH(n+1)$ be the linear map for which
\[ \langle \psi(\pi),  \pi' \rangle = P(\pi \rightarrow \pi'). \]
\end{definition}

\begin{lemma}
The maps $\psi$ and $\psi'$ are adjoint to each other.  That is, if $\pi \in \HH(n-1)$ and $\psi \in \HH(n)$, then
\[
\langle  \psi \pi, \pi' \rangle = \frac{1}{2^n} \langle  \pi, \psi' \pi' \rangle.
\]
\end{lemma}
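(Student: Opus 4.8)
The plan is to verify the adjointness relation by a direct, combinatorial comparison of the two transition weights $P(\pi \to \pi')$ (given by Algorithm~\ref{alg:hh_shuffle}) and $P'(\pi' \to \pi)$ (given by Algorithm~\ref{alg:hh_reverse_shuffle}). Concretely, I want to show that for every pair $(\pi, \pi')$ with $\pi \in \HH(n-1)$ and $\pi' \in \HH(n)$,
\[
P(\pi \to \pi') = \frac{1}{2^{\,\#\{\text{free cells in forward pass}\}}}, \qquad P'(\pi' \to \pi) = \frac{1}{2^{\,\#\{\text{free cells in reverse pass}\}}},
\]
with the convention that both probabilities are $0$ on the same pairs (i.e. the forward algorithm can produce $\pi'$ from $\pi$ iff the reverse algorithm can produce $\pi$ from $\pi'$). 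Granting that, the claimed identity $\langle \psi\pi, \pi'\rangle = 2^{-n}\langle \pi, \psi'\pi'\rangle$ reduces to the bookkeeping statement that
\[
\#\{\text{free cells forward}\} = \#\{\text{free cells reverse}\} + n,
\]
i.e. that the reverse pass has exactly $n$ fewer genuinely random coin tosses than the forward pass, uniformly over all valid $(\pi,\pi')$.

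First I would make precise the notion of a \emph{free cell}. In the forward algorithm, cell $(i,j)$ (with $0\le j\le i$, $0\le i\le n-1$ after reindexing to match the lemma) is \emph{forced} exactly when $g_{i,j}=h_{i-1,j}$ or $g_{i,j}=h_{i-1,j-1}$, and otherwise it is free; similarly for the reverse algorithm with the row \emph{below} playing the role of the constraining row. The key observation is that whether a cell is forced is determined entirely by the pair of tableaux $(g) = \pi$ and $(h) = \pi'$ — not by the coin tosses — because the forced value of $h_{i,j}$ (resp.\ $g_{i,j}$) is already pinned down. So on any pair where both probabilities are nonzero, $P(\pi\to\pi')=2^{-F}$ and $P'(\pi'\to\pi)=2^{-F'}$ where $F, F'$ are deterministic functions of $(\pi,\pi')$, and I must compute $F - F'$.

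The heart of the argument is a counting identity. The total number of cells in rows $0,\dots,n-1$ that the forward pass iterates over is $\binom{n+1}{2}$; the reverse pass iterates over rows $0,\dots,n-2$, a total of $\binom{n}{2}$ cells — a difference of exactly $n-1$ cells, accounting for $n-1$ of the needed discrepancy. The remaining discrepancy of one coin toss must come from the claim that, within the overlapping rows $0,\dots,n-2$, the forward pass and the reverse pass have exactly the same number of forced cells. I would prove this by exhibiting, for each pair $(\pi,\pi')$, an explicit bijection between forward-forced cells and reverse-forced cells in these rows: a cell $(i,j)$ is forward-forced because $h_{i,j}$ equals an entry of row $i-1$ of $\pi'$, which by the interlacing/Gelfand–Tsetlin structure is exactly the condition that forces a (shifted) cell in the reverse pass when it reconstructs row $i-1$ from row $i$; tracking the shifts in the ``$+1$'' versus ``$-1$'' branches of the two algorithms gives the correspondence. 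Equivalently — and this may be cleaner — I would argue that in a column-style accounting, each vertical edge of $\pi$ and each vertical edge of $\pi'$ contributes a contact/non-contact bit, and the forced/free counts in the two directions differ only by the one row that is added (forward) versus deleted (reverse).

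The main obstacle I anticipate is precisely this last bijection: making the off-by-one in the forced-cell count rigorous uniformly over all $(\pi,\pi')$, including degenerate configurations where an entry of $\pi$ simultaneously touches $h_{i-1,j}$ and $h_{i-1,j-1}$ (so that the ``if/else if'' in the algorithm matters) or where the coin $\xi_{i,j}$ would push $h_{i,j}$ into a position that only accidentally satisfies an interlacing bound. I would handle these by checking that in every such boundary case the forward algorithm's ``if'' branch and the reverse algorithm's ``if'' branch are triggered consistently — essentially, that the two algorithms' tie-breaking conventions are mirror images — so that the forced-cell bijection still goes through. Once the coin-toss count identity $F = F' + n$ is established, the adjointness follows immediately by comparing matrix entries in the orthonormal bases of $\bbC\HH(n-1)$ and $\bbC\HH(n)$.
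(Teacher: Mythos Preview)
Your overall framework is sound --- on a valid pair $(\pi,\pi')$ both transition probabilities are powers of $2$, and the exponents are the respective numbers of free cells --- but the bookkeeping you set up does not add up, and the specific bijection you propose is aimed at the wrong target.

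First, the arithmetic slip: $\binom{n+1}{2}-\binom{n}{2}=n$, not $n-1$. So the cell-count discrepancy already accounts for the full factor $2^n$, and what you must prove is that the number of \emph{forced} cells is \emph{exactly the same} in the two passes, not that it differs by one. Your sentence ``the remaining discrepancy of one coin toss must come from the claim that \ldots the forward pass and the reverse pass have exactly the same number of forced cells'' is therefore self-contradictory.

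Second, the bijection you sketch is not the right one. A forward-forced cell $(i,j)$ is forced because $\pi_{i,j}$ coincides with an entry of row $i-1$ of $\pi'$; a reverse-forced cell $(i,j)$ is forced because $\pi'_{i,j}$ coincides with an entry of row $i+1$ of $\pi$. These are not the same condition on the same rows, so ``equal forced counts within the overlapping rows $0,\dots,n-2$'' is false in general. What \emph{is} true is that forward-forced $(i,j)$ corresponds to reverse-forced $(i-1,j)$ or $(i-1,j-1)$ via a row shift; checking this gives the desired equality of total forced counts.

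The paper avoids this detour entirely. Its proof works with the \emph{free} cells directly: a forward free choice at some edge $e_i$ initiates a chain of forced moves at $e_{i+1},\dots,e_{i'}$ in successive rows; in the reverse pass the same chain appears with its free choice at the bottom edge $e_{i'}$, \emph{unless} $e_{i'}$ lies in the bottom row that the reverse algorithm sets deterministically. Since that bottom row has exactly $n$ edges, exactly $n$ forward free choices fail to have a reverse counterpart, giving $F=F'+n$ immediately. This ``block'' bijection is both shorter and more transparent than matching forced cells.
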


\begin{proof}
Algorithms~\ref{alg:hh_shuffle} and~\ref{alg:hh_reverse_shuffle} move the edges either randomly (steps \ref{step:free1}~and~\ref{step:free2}) or deterministically (steps~
\ref{step:forced1}, 
\ref{step:forced2}, 
\ref{step:forced3}~and~
\ref{step:forced4}).  We call the random moves \emph{free} and the deterministic moves \emph{forced}, because all of the deterministic moves occur in blocks, precipitated by a preceding free move.  Figure~\ref{fig:hh_shuffle} shows an instance of the shuffling algorithm, with blocks of forced moves circled.  In this terminology, we have 
\begin{align*}
\langle  \psi \pi, \pi' \rangle &= 2^{-\#\{\text{free choices in $\pi \stackrel{\psi}{\rightarrow} \pi'$} \}} \\
\langle  \pi, \psi'\pi' \rangle &= 2^{-\#\{\text{free choices in $\pi' \stackrel{\psi'}{\rightarrow} \pi$} \}}
\end{align*}

Let $S$ be all of the \emph{free choices} in $\pi \stackrel{\psi}{\rightarrow} \pi'$; and let $S'$ be the set of all free choices which do not force any edges in row $n$ of $\pi$ to move.  Since there are $n$ edges in the bottom row, $|S| = |S'| + n$.  

Let $e_i$ be one of the edges in $E$ in row $i$, and suppose that in passing from $\pi$ to $\pi'$.  Say that $e_i$ forces the movement of edges $e_{i+1}, \ldots, e_{i'}$, in successive rows.  Observe, then, that the time-reversed algorithm $\psi'$, in passing from $\pi'$ to $\pi$, sees a free choice at edge $e_{i'}$ which forces edges $e_i, \ldots, e_{i+1}$ to move.  Moreover, $\pi'$ moves row $n$ deterministically (and then deletes it) so these are never free choices.  As such, the free choices in $\pi' \stackrel{\psi'}{\rightarrow} \pi$ are in bijection with $S'$.
\begin{equation*}
\langle \psi \pi, \pi'\rangle = 2^{-|S|} = 2^{-|S'|-n} = 2^{-n}\langle\pi, \psi'\pi'\rangle
\end{equation*}
\end{proof}

\begin{definition}
Let $\mu_n\in \bbC \HH(n)$ denote the vector corresponding to the uniform probability distribution on $\HH(n)$:  
\[
\mu_n = \frac{1}{|HH(n)|} \sum_{\pi \in \HH(n)} \pi. 
\]
\end{definition}

\begin{proposition} Domino shuffling preserves the uniform distribution:
 \[\psi \mu_{n-1} = \mu_{n}.\]
\end{proposition}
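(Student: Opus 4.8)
The plan is to deduce this from the adjointness lemma together with a simple counting identity. First I would record the two facts I need: the adjointness relation $\langle \psi\pi,\pi'\rangle = \frac{1}{2^n}\langle\pi,\psi'\pi'\rangle$ for $\pi\in\HH(n-1)$, $\pi'\in\HH(n)$, and the enumeration $|\HH(n)| = 2^{n(n+1)/2}$ from Section~\ref{sec:enumerativecombinatorics}, which gives $|\HH(n)| = 2^n\,|\HH(n-1)|$. I would also note the elementary stochasticity fact that $\psi'$ is a Markov kernel: for each fixed $\pi'\in\HH(n)$, Algorithm~\ref{alg:hh_reverse_shuffle} always terminates with \emph{some} staircase tableau in $\ST(n-1)$ (this is the ``one does need to check'' remark), so $\sum_{\pi\in\HH(n-1)}\langle\pi,\psi'\pi'\rangle = 1$.

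Then the computation is short. Expanding, $\psi\mu_{n-1} = \frac{1}{|\HH(n-1)|}\sum_{\pi\in\HH(n-1)}\psi\pi$, and I want to show this equals $\frac{1}{|\HH(n)|}\sum_{\pi'\in\HH(n)}\pi'$. It suffices to check the coefficient of each basis vector $\pi'\in\HH(n)$ is $1/|\HH(n)|$. That coefficient is
\begin{equation*}
\langle \psi\mu_{n-1},\pi'\rangle = \frac{1}{|\HH(n-1)|}\sum_{\pi\in\HH(n-1)}\langle\psi\pi,\pi'\rangle = \frac{1}{|\HH(n-1)|}\sum_{\pi\in\HH(n-1)}\frac{1}{2^n}\langle\pi,\psi'\pi'\rangle,
\end{equation*}
where the last equality is the adjointness lemma. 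Pulling out $1/2^n$ and using $\sum_{\pi}\langle\pi,\psi'\pi'\rangle = 1$ gives $\frac{1}{2^n|\HH(n-1)|} = \frac{1}{|\HH(n)|}$ by the cardinality identity, which is exactly what we want.

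The genuinely substantive input is the adjointness lemma, which is already proved in the excerpt, so the remaining work is bookkeeping. The one point that deserves a careful sentence rather than a wave of the hand is the claim that $\psi'$ is stochastic, i.e.\ that the time-reversed algorithm, run on any $\pi'\in\HH(n)$ with any choice of Bernoulli variables, produces a valid element of $\ST(n-1)$, and that its output distribution sums to one over $\HH(n-1)$; the latter is immediate once one knows the algorithm is well-defined (each row is determined from the row below by choices that always yield an interlacing tableau), since the coin tosses partition the probability mass. I expect this verification to be the only place where one must actually look back at the interlacing conditions of Algorithm~\ref{alg:hh_reverse_shuffle}; everything else is the two-line linear-algebra manipulation above. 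One should also double-check the index conventions (the lemma is stated for $\pi\in\HH(n-1)\to\pi'\in\HH(n)$ with the factor $2^{-n}$, matching $|\HH(n)|/|\HH(n-1)| = 2^n$), but this is routine.
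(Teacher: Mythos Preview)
Your argument is correct and follows the same skeleton as the paper's: expand $\psi\mu_{n-1}$ in the basis, apply the adjointness lemma to trade $\psi$ for $\psi'$, and then use stochasticity of $\psi'$ to collapse the inner sum to $1$.

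The one noteworthy difference is in the closing step. You invoke the enumeration $|\HH(n)| = 2^n|\HH(n-1)|$ from Section~\ref{sec:enumerativecombinatorics} to match the constant $\frac{1}{2^n|\HH(n-1)|}$ with $\frac{1}{|\HH(n)|}$. The paper avoids this: having shown that $\psi\mu_{n-1}$ is proportional to $\sum_{\pi'\in\HH(n)}\pi'$ (hence to $\mu_n$), it simply observes that both $\psi\mu_{n-1}$ and $\mu_n$ are probability vectors (the former because $\psi$ is itself stochastic), so proportionality forces equality. This buys something: the cardinality identity $|\HH(n)| = 2^n|\HH(n-1)|$ then drops out as a \emph{consequence} rather than an input, giving a third, independent derivation of $|\HH(n)| = 2^{n(n+1)/2}$. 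Your version is perfectly valid but consumes that identity rather than producing it.
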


\begin{proof}
This is a straightforward computation:
\begin{align*}
\psi \mu_{n-1} 
&= 
\sum_{\pi \in \HH(n-1)} \; \sum_{\pi' \in \HH(n)}  
\langle  \psi \pi, \pi' \rangle \pi' \\
&= \frac{1}{2^n}
\sum_{\pi' \in \HH(n)} \pi' \left(\sum_{\pi \in \HH(n-1)} 
\langle  \pi, \psi'\pi' \rangle\right)  \\
&= \frac{1}{2^n} \sum_{\pi' \in HH(n)} \pi'.
\end{align*}

In the second line, the bracketed sum is equal to one for any $\pi'$ because $\psi'$ is stochastic.  The latter vector is proportional to $\mu_n$, and is thus equal to $\mu_n$ because the coordinates of both vectors sum to one.
\end{proof}

Note that it follows from this proof that 
$|HH(n)| = 2^n |HH(n-1)|$, which provides a new derivation
of the fact that  $|HH(n)| = 2^{n(n+1)/2}$.

\section{Review: the Aztec diamond}
\label{sec:aztec}

The \emph{Aztec Diamond} of order $n$ (which shall be denoted $A_n$)
is a certain shape in
the plane that can be covered with dominoes in $2^{n(n+1)/2} $ ways. 
More precisely, $A_n$ consists of the union of all squares 
whose corners have integer coordinates, whose sides are parallel to the
coordinate axes and whose interiors are contained
in $\{(x,y)\in \mathbb{R}^2 : |x|+|y| < n+1\}$.
The model was introduced in~\cite{eklp,eklp2} in the study of Alternating 
sign matrices and a good survey is~\cite{Joh05}.  
In this section we give a short review of the results we
need about this model with citations. 

One way to study the random tilings in this model is to introduce 
a certain particle process, see \cite{Joh05,Nor10} and also  
Figure~\ref{fig:aztecparticles} for details. 
In short, the lattice on which the dominos are placed is colored like a chessboard, and all dominoes whose bottommost or rightmost square is dark is 
represented by a particle.
The tiling does uniquely determine the positions of the
particles. The converse is only true modulo the fact
that there will be $2\times 2$-rectangles where two horizontal dominoes
can be switched for two vertical ones or the other way around. 
Particle configurations coming from a tiling of $A_n$ are in bijection
with Alternating sign matrices and this is the original motivation
for studying this tiling model. 
However, the measure on particle configurations that is induced by
uniform measure on all possible tilings is not the same 
as uniform measure on all Alternating sign matrices (it is, rather, connected to the \emph{2-enumeration} of alternating sign matrices, see~\cite{eklp}).

In a tiling of $A_n$ there are $\binom{n+1}2$ particles
on $n$ rows, see Figure~\ref{fig:aztecparticles}. 
Along the line $y=1$ there is a single particle, along 
line 2 there are two, etc. Let $x^{i}_j$ be the position
of the $j$th particle on line $y=i$.  Observe that the particles \emph{interlace} in the sense that
\[
x_i^{j+1} \leq x_i^j \leq x_{i+1}^{j-1};
\]
this is similar, but not the same, as the interlacing conditions for the half-hexagon particle process in Section~\ref{sec:ipp}.

There is an algorithm, called the \emph{shuffling algorithm}, 
that can be used to construct a random tiling and 
is described in great depth in each of~\cite{eklp2,JoPrSh98,Pro03}.
The procedure starts with a tiling of $A_n$; one moves the dominos of the tiling about in a certain way, decided by a certain number
of coin flips, producing a tiling of $A_{n+1}$.  
If one starts with the empty tiling of $A_0$, performs
this algorithm $n$ times, tossing fair coins all the way, 
then one ends up with a sample from the uniform distribution
of all possible tilings. 

The idea of~\cite{Nor10} is to look at the positions of the 
aforementioned particles under the evolution of this algorithm.
It turns out that the particles are not glued to the tiles.  
The dynamics of these particles are as follows.
In the following all $\gamma^i_j(t)$ for $i$, $j$, $t=1$, 2, \dots,
are independent Bernoulli random variables, that is 
one with probability $\frac12$ and zero otherwise.

\begin{figure}
\includegraphics{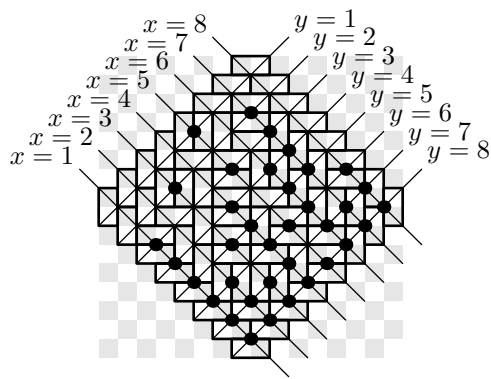}
\caption{Particle processes associated with domino tilings of
an Aztec diamond. The underlying lattice is colored like a chessboard
and all dominoes whose bottommost or rightmost square is dark is 
represented by a particle. }
\label{fig:aztecparticles}
\end{figure}

It turns out that the first particle
performs the simple random walk
\begin{equation}
x^1_1(t)=x^1_1(t-1)+\gamma_1^1(t).
\end{equation}
The particle $x^2_1$ performs a simple random walk with a reflecting
boundary. More precisely, 
while $x^2_1(t)<x^1_1(t)$ it performs a random walk independently
of  $x^1_1$, at each time either staying or adding one with 
equal probability. 
However, when there is equality, $x^2_1(t)=x^1_1(t)$, it is pushed
forward by that particle. In order to represent this as
a formula, we subtract one if the particle attempts to jump
past $x^1_1$. 
\begin{equation}
x^2_1(t)=x^2_1(t-1)+\gamma^2_1(t) 
- \mathbf{1}\{x^2_1(t-1)+\gamma^2_1(t)= x^1_1(t-1)+1\}
\end{equation}
By symmetry then 
\begin{equation}
x^2_2(t)=x^2_2(t-1)+\gamma^2_2(t)  + \mathbf{1}\{x^2_2(t-1)+\gamma^2_2(t)= x^1_1(t-1)\}.
\end{equation}
The same pattern repeats itself evermore.
\begin{align}
x^j_1(t)&=x^j_1(t-1)+\gamma^j_1(t)
- \mathbf{1}\{x^j_1(t-1)+\gamma^j_1(t)= x^{j-1}_1(t-1)+1\}\\
x^j_j(t)&=x^j_j(t-1)+\gamma^j_j(t)
+ \mathbf{1}\{x^j_j(t-1)+\gamma^j_j(t)= x^{j-1}_{j-1}(t-1)\}\\
x^j_i(t)&=x^j_i(t-1)+\gamma^j_i(t)
- \mathbf{1}\{x^j_i(t-1)+\gamma^j_i(t)= x^{j-1}_{j}(t-1)+1\}\\
&\quad\quad\quad\quad\quad\quad\quad\quad\ \;
+ \mathbf{1}\{x^j_i(t-1)+\gamma^j_i(t)= x^{j-1}_{j-1}(t-1)\}.
\end{align}
with initial conditions
$x^j_i(j)=i$
for $1\leq i \leq j$.

In the analysis~\cite{Nor10} of the asymptotics of the domino shuffling algorithm, it turns out to be quite inconvenient that the $n$ 
particles on level $n$ are not created until time $n$. A simple change of 
variables will fix this. Let
\begin{equation}
\label{eqn:capitalx} 
X_i^j (t) = x_i^j(t+i)
\end{equation}
for $1\leq i\leq j$ and $t=1$, 2, \dots.  We mention this here for a completely different reason:
Rewriting the above recursion formulas in terms of the variables
$(X^j_i)_{1\leq i\leq j}$ gives the same recursion as is implemented 
by Algorithm~\ref{alg:hh_shuffle} above (though with a different initial condition, see Section~\ref{sec:half aztec shuffle}).

\begin{figure}
\includegraphics{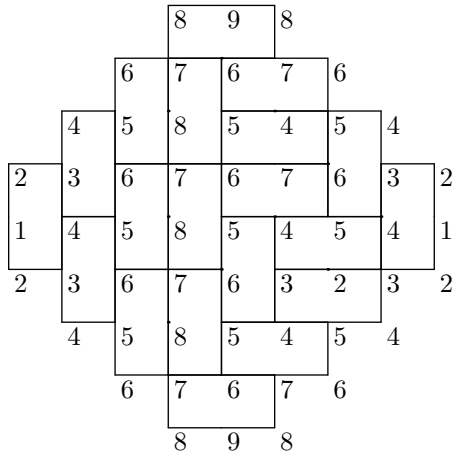}
\caption{The height function is defined on the lattice on which 
the corners of the dominoes line up.}
\end{figure}

\section{The Aztec Half-Diamond}

Recall the definition of the Aztec diamond $A_n$ in Section~\ref{sec:aztec}.
The Aztec half-diamond $H_n$ of order $n$ is a certain subregion of $A_n$. 
More precisely,  for $n$ even, 
\begin{equation*}
H_n  = \{(x,y) \in A_n :  y \geq 2 \lfloor x/2 \rfloor \} 
\end{equation*}
and, for $n$ odd,   
\begin{equation*}
H_n  = \{(x,y) \in A_n :  y + 1 \geq 2 \lfloor (x+1)/2 \rfloor \}. 
\end{equation*}
Though the definition as stated is a bit cryptic, the 
Figure~\ref{fig:half-diamond} should make this quite clear.

\begin{figure}
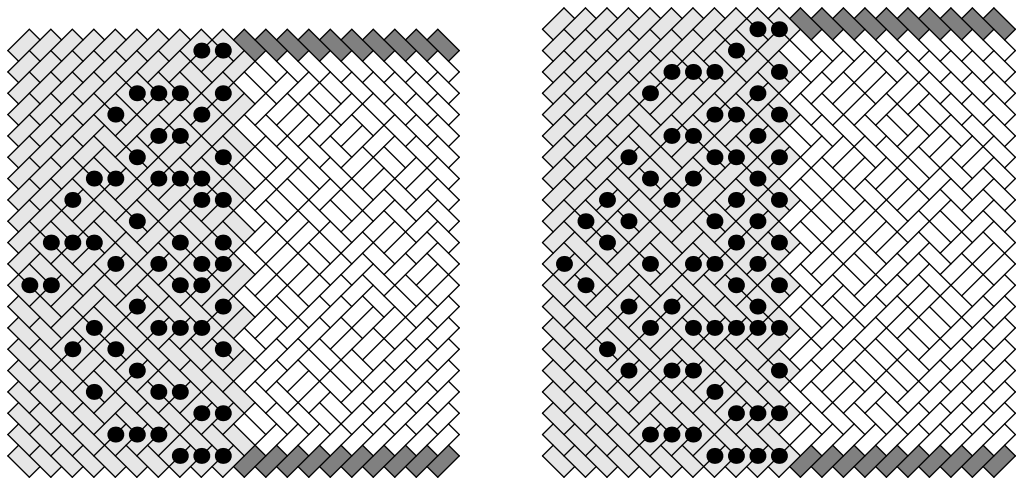

\begin{center}
\includegraphics{half_aztec20.mps}
\hspace*{0.3in}
\includegraphics{half_aztec21.mps}
\end{center}
\caption{Two Aztec half-diamonds make an Aztec diamond.  The left picture shows half-diamonds of orders 18 and 20; the right picture orders 19 and 21.  The interlacing particle process is also shown.  The right picture is the domino shuffle of the left.  
\label{fig:half-diamond}}
\end{figure}

\subsection{Dynamics: Domino shuffling}
\label{sec:half aztec shuffle}

The Aztec Half-Diamonds also have a domino shuffling algorithm.  Propp~\cite{Pro03} describes a way to generate random perfect matchings on certain planar bipartite graphs called ``generalized domino shuffling''.  The strategy is to embed the graph $G$ into an Aztec diamond of sufficiently large order $n$, and then compute a certain series of probabilities $P^{i}_{j}(m)$ where $1 \leq m \leq n$ and $1 \leq i,j \leq m$ (this is perhaps an overly concise summary, but both the manner of embedding and the means of computation are described explicitly enough in~\cite{Pro03} to allow computer implementation).  To generate the perfect matching, one then simply runs the domino shuffling algorithm, with the following modification: make $\xi^i_j(m)$ a Bernoulli random variable which takes the value 1 with probability $P^{i}_{j}(m)$.  The final tiling of the Aztec Diamond of order $n$, restricted to the embedded copy of $G$, turns out to be uniformly random; the intermediary tilings of the smaller Aztec diamonds may be discarded.

Using the embedding shown in Figure~\ref{fig:half-diamond} for generalized domino shuffling, one can check that the intermediary tilings of the smaller Aztec diamonds are also of the form of Figure~\ref{fig:half-diamond}!  As such, Propp's algorithm is a ``domino shuffle'' for the Aztec half-diamond in our sense:  a random, locally defined map which increases the size of the half-diamond but preserves the uniform distribution.  Indeed, this procedure is exactly the same as ordinary domino shuffling everywhere except the center line when $n$ is even; at these times, the particles in the center are forced to jump to equally spaced positions.  This is equivalent to imposing condition
\begin{equation*}
x^{m+1} _i (2m+1) = 2i
\end{equation*} 
 for $1\leq i\leq m $.

\subsection{Height functions and limit shape}

It is possible~\cite{Thurston} to associate a discrete surface in $\bbR^3$, called a \emph{height function}, to any domino tiling of the plane.  More precisely, the height function $h:\bbZ^2 \rightarrow \bbZ$, where the domain is the square grid whose vertices coincide with the corners of the dominos and the centers of their edges.  
\begin{definition}
Let $T$ be a domino tiling of the region $R$.  Then $h:\bbZ^2 \rightarrow \bbZ$ is a \emph{height function for $T$} if, whenever $x+y \equiv 0 \pmod{2}$ and $(x,y)$ is in $R$, then
\begin{itemize}
\item $h(x,y) = h(x,y+1)+1$ if the edge from $(x,y)$ to $(x,y+1)$ does not cross a domino in $T$;
\item $h(x,y) = h(x+1,y)-1$ if the edge from $(x,y)$ to $(x+1,y)$ does not cross a domino in $T$.
\end{itemize}
\end{definition}
Note that $h$ determines $T$ uniquely, and that two height functions for $T$ differ only by a constant.

This definition coincides with those in~\cite{Thurston, cohn-kenyon-propp} and, In the case where the region $R$ is an Aztec Diamond, this definition appears in~\cite{eklp}, where it is closely related to the height function for an alternating sign matrix. 

The reader should be advised that there are  closely related concepts in the 
literature called relative height 
functions~\cite{kenyon-okounkov, kenyon-okounkov-sheffield}, edge-placement 
probabilities and one-point functions for the particle 
process~\cite{kenyon, Pro03}.

Fix a region $R$, a tiling $T$ of $R$, and a height function $h$ for $T$.  Since no tiles ever cross the boundary of $R$, the restriction of $h$ to $\partial R$ is independent of $T$.  Indeed, $\left.h\right|_{\partial R}$ can even be computed without specifying $T$ at all.  
\begin{definition}
The function $\left.h\right.|_{\partial R}$ is called a \emph{boundary height function}. 
\end{definition}

\begin{proposition} (See \cite[Section 4]{Thurston})
$R$ possesses a domino tiling if and only if $R$ has a well-defined boundary height function.
\end{proposition}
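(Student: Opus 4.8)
The plan is to prove the two implications separately. The forward implication is essentially contained in the discussion preceding the statement; the reverse implication is the substantive half, and is Thurston's construction of an extremal height function.

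\textbf{Forward direction.} Suppose $R$ has a domino tiling $T$. As recalled above, $T$ admits a height function $h \colon \bbZ^2 \cap R \to \bbZ$, and since no domino of $T$ crosses $\partial R$, the restriction $h|_{\partial R}$ is obtained by walking along $\partial R$ and adding $\pm 1$ at each boundary edge according to the colouring rule. I would check that this walk is \emph{consistent} --- i.e.\ that it returns to its starting value around every closed boundary loop, so that $h|_{\partial R}$ is well defined independently of the base point --- by a discrete Stokes argument: the total height increment around $\partial R$ equals the sum, over the unit squares contained in $R$, of the increment around each such square, and the increment around a single unit square is $0$ because that square is covered by exactly one domino of $T$, which crosses exactly one of its four edges (giving three $\pm 1$'s and one $\mp 3$, summing to $0$). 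Hence $R$ has a well-defined boundary height function. (For $R$ not simply connected one must also check consistency around each boundary component; all regions considered in this paper are simply connected.)

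\textbf{Reverse direction.} Suppose $R$ has a well-defined boundary height function $h_\partial$. Following Thurston~\cite{Thurston}, I would build a height function on all of $\bbZ^2 \cap R$ by an extremal construction: for each vertex $v \in \bbZ^2 \cap R$ and each lattice path $\gamma \subseteq R$ from a boundary vertex $b$ to $v$, form $h_\partial(b)$ plus the sum along $\gamma$ of the largest admissible increment on each edge (which is $+1$ or $+3$, according to the colouring), and let $h_{\max}(v)$ be the minimum of this quantity over all $b$ and $\gamma$; define $h_{\min}$ dually using the smallest admissible increments. The steps are: (i) verify the extremum is attained, so that $h_{\max}$ and $h_{\min}$ are finite; (ii) verify $h_{\max}|_{\partial R} = h_\partial = h_{\min}|_{\partial R}$; (iii) verify that $h_{\max}$ (equivalently $h_{\min}$) satisfies the defining inequalities of a height function; and (iv) convert a height function into a tiling by placing a domino across precisely those edges where the height jumps by $\pm 3$, checking that each unit square of $R$ then receives exactly one domino-half. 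Step (iv), and the passage from a tiling to a height function used above, are standard and may be cited from~\cite{Thurston, cohn-kenyon-propp}; the content specific to this statement is (i)--(iii).

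\textbf{Main obstacle.} The crux is steps (i) and (ii): showing that the minimization defining $h_{\max}$ is bounded below and, in particular, does not drop the value at a boundary vertex below $h_\partial$. This is exactly where the hypothesis that $h_\partial$ is \emph{well defined} is used, and it is the only place it enters. Concretely, for $b,v$ both on $\partial R$ one must show that no interior path from $b$ to $v$ yields a smaller value than the boundary arc from $b$ to $v$; I would deduce this from the fact that in a simply connected $R$ any closed lattice loop bounds a union of unit squares, over which the maximal-increment sum can be compared with $0$ by the same telescoping used in the forward direction, together with the consistency of $h_\partial$ along $\partial R$. Once this is established, (iii) is a routine local check at each interior vertex, and the tiling produced in (iv) completes the proof.
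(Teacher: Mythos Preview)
The paper does not give a proof of this proposition: it is stated with a bare citation to Thurston~\cite[Section~4]{Thurston} and nothing more. So there is no ``paper's own proof'' to compare against; your proposal is, in outline, a reconstruction of Thurston's argument, and in that sense it matches the cited source by default.

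Your forward direction is fine, and your reverse direction correctly identifies Thurston's extremal construction $h_{\max}$ and the key steps (i)--(iv). One genuine weak point: your justification of step~(ii) appeals to ``the same telescoping used in the forward direction,'' but that telescoping relied on each unit square being covered by exactly one domino (three $\pm 1$'s and one $\mp 3$). In the reverse direction no tiling is yet in hand, so the maximal-increment sum around a unit square is \emph{not} zero, and the argument cannot be reused verbatim. In Thurston's treatment this step is handled differently --- one shows directly that the greedy upper bound along any path dominates the boundary arc, using only the local $\pm 1/\mp 3$ structure and simple-connectedness --- and it is precisely here that the construction can fail for regions that are not tileable. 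So your diagnosis that (i)--(ii) is the crux is right, but the proposed mechanism for (ii) needs to be replaced.
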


Once it became possible to generate uniformly random tilings of large Aztec diamonds, it became immediately obvious that all such tilings have the same ``shape''.  It was first shown in~\cite{JoPrSh98} that a typical tiling of a large Aztec diamond has all of its disorder concentrated in a circular region, where the circle is tangent to all four sides of the diamond; the tiling is frozen close to the four corners.  

In fact, more is true: the height function of a random tiling of an Aztec diamond tends to the following explicit limit.  This limit seems possible to do using the correlation kernel for the Aztec Diamond, as defined in\cite{Joh05}, and something similar (asymptotics for edge placement probabilities) were computed in~\cite{CoElPr96}), but the first explicit derivation would seem to be in~\cite{Romik}.  The coordinates are slightly different:  in the following theorem, $h_{i,j}^*$ represents a rescaled height function of an order $n$ Aztec diamond, where the domain is to $[0,1] \times [0,1]$ and the range is rescaled to $[0,1]$.  Indeed, for the remainder of this section we will work in these coordinates.

\begin{theorem}  (Theorem 11' in~\cite{Romik})
\label{thm:romik_height_function}
Define
\begin{eqnarray*}
\!\!\!\!\!Z(x,y)\!\! &=& 
\frac{2}{\pi}\left[(x-1/2) \arctan\left( \frac{\sqrt{\frac14-(x-1/2)^2-(y-1/2)^2}}{1/2-y} \right) \right.
\nonumber \\ & & + \frac{1}{2} \arctan \left( \frac{2(x-1/2)(1/2-y)}{\sqrt{\frac14-(x-1/2)^2-(y-1/2)^2}} \right)
\label{eq:def-zxy} \\ & & \left. - (1/2-y) \arctan \left( \frac{x-1/2}{\sqrt{\frac14-(x-1/2)^2-(y-1/2)^2}} \right) \right].
\nonumber
\end{eqnarray*}
and define
\[
G[x,y] = \begin{cases}
x+y & 0 \le x \le \frac{1-2\sqrt{y(1-y)}}{2}, \\
x+ Z(x,y) & \frac{1-2\sqrt{y(1-y)}}{2} < x < \frac{1+2\sqrt{y(1-y)}}{2}, \\
x-y & \frac{1+2\sqrt{y(1-y)}}{2} \le x \le 1. 
\end{cases}
\]

Then as $n\to\infty$ we have the convergence in probability
$$
\max_{0 \le i,j \le n} \left|\frac{{h_{i,j}^*}^n}{n} - G(i/n,j/n) \right| \xrightarrow[n\to\infty]{\mathbb{P}}
0.
$$
\end{theorem}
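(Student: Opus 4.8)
This is a macroscopic limit-shape statement, so I would split it into two parts: (i) pointwise convergence in probability of the appropriately rescaled height function of the order-$n$ Aztec diamond, evaluated at the lattice point $(\lfloor nx\rfloor,\lfloor ny\rfloor)$, to $G(x,y)$ for each fixed $(x,y)\in[0,1]^2$; and (ii) an upgrade from pointwise to uniform convergence. Step (ii) is soft: any domino height function changes by $\pm 1$ along each lattice edge, so the rescaled functions $(i,j)\mapsto h^*_{i,j}/n$ are uniformly Lipschitz, hence equicontinuous; combined with pointwise convergence on a countable dense set and an Arzel\`a--Ascoli argument this yields the stated $\sup$-norm convergence, continuity of the limit being automatic. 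One also checks separately that the three-piece formula for $G$ is continuous across the arctic circle $(x-\tfrac12)^2+(y-\tfrac12)^2=\tfrac14$ (the boundary values of $Z$ being $y$ and $-y$ on its two branches).

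For (i) I would exploit the determinantal structure of the Aztec diamond particle process from Section~\ref{sec:aztec}: its positions form a determinantal point process with an explicit double contour integral correlation kernel $K_n$ (see~\cite{Joh05}). The height function is, up to a deterministic term computable from $\left.h\right|_{\partial R}$ alone, a linear statistic of this process --- traversing a horizontal lattice line, each increment of $h^*$ is $\pm 1$ according to the occupation of a site --- so $\mathbb{E}[h^*_{i,j}]$ is, up to that deterministic term, a partial sum of the one-point density $\rho_n(a,b)=K_n((a,b),(a,b))$; and passing to convergence in probability only requires $\mathrm{Var}(h^*_{i,j})=o(n^2)$, which is immediate from the determinantal variance formula together with $|K_n|\le 1$ (indeed one expects $O(\log n)$ in the liquid region).

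The heart of the argument is then a steepest-descent analysis of $\rho_n(\lfloor nx\rfloor,\lfloor ny\rfloor)$ as $n\to\infty$. The phase has a pair of critical points which are complex conjugate exactly when $(x-\tfrac12)^2+(y-\tfrac12)^2<\tfrac14$, precisely the middle regime in the definition of $G$, and real otherwise. Outside the arctic circle the density degenerates to $0$ or $1$: the process is frozen, the tiling near such a point is deterministic (brick-wall), $h^*$ follows the affine functions $x\pm y$, and a crude determinantal large-deviation bound rules out fluctuations. Inside, $\rho_n\to\rho(x,y)$, with $\rho$ equal to $1/\pi$ times the argument of a critical point; since the limiting slope $\partial_x G$ in the liquid region is an affine function of $\rho$, integrating in $x$ from the left branch of the arctic circle reproduces $x+Z(x,y)$, the three $\arctan$ terms of $Z$ arising as the explicit antiderivative (two of them via an integration by parts). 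Verifying this identity --- equivalently, that $\partial Z/\partial x$ equals the density expression with the correct boundary values --- is the delicate but routine bookkeeping.

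I expect the main obstacle to be the steepest-descent estimate uniformly near the arctic circle, where the two critical points coalesce and the ordinary saddle-point bound degenerates; for the stated $\sup$-norm conclusion, however, one can afford to be imprecise in an $\varepsilon$-neighbourhood of the curve and fill it in afterwards using the Lipschitz bound on $h^*$ from step (ii). An alternative route avoids the kernel asymptotics: by the variational principle of~\cite{cohn-kenyon-propp}, $h^*/n$ concentrates on the unique Lipschitz maximiser of the dimer surface-tension functional with the limiting boundary data, and it then suffices to check that $G$ solves the associated Euler--Lagrange (complex Burgers) equation with the correct boundary conditions --- but producing $G$ explicitly still comes down to essentially the same computation.
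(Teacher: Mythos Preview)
The paper does not prove this theorem at all: it is quoted verbatim as Theorem~11' of~\cite{Romik} and used as a black box. There is therefore nothing to compare your argument against in this paper.

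That said, your outline is a reasonable sketch of how results of this type are actually established in the literature. The determinantal route you describe (one-point function asymptotics via steepest descent on the Krawtchouk/Aztec kernel of~\cite{Joh05}, followed by integration in $x$ and a Lipschitz/equicontinuity upgrade to uniform convergence) is essentially the strategy behind~\cite{CoElPr96} and~\cite{Joh05}; your alternative via the variational principle of~\cite{cohn-kenyon-propp} plus direct verification that $G$ solves the Euler--Lagrange equation is closer in spirit to what~\cite{Romik} actually does. Both are viable, and your identification of the coalescing-saddle issue near the arctic circle, together with the Lipschitz patch, is the standard workaround. Nothing in your proposal is wrong, but since the present paper simply cites the result, a one-line reference would have sufficed here.
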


In comments after Equation (15), \cite{Romik} observes that 
\begin{equation}
\label{eqn:flat_in_the_middle}
G[x,1/2] = \frac{1}{2};
\end{equation}  
that is to say, that the limiting height function is constant across the center of the Aztec Diamond.  (It is also possible to observe this directly: the Aztec diamond has a reflection symmetry in the line $y=1/2$ which leaves the uniform measure invariant, but which negates height functions up to a constant).
This observation is quite important for our purposes: it means that the limit shape for Aztec Diamonds coincides with the \emph{boundary height function} for Aztec Half-Diamonds. 

If we replace the Aztec diamonds with a sequence of regions $R_n$ approximating a given region $R$, in such a way that the rescaled boundary height functions also tend to a limiting boundary height function on $R$, then there is still a unique limiting shape and a variational principle (though in general there is no reason to expect this limiting shape to be as nice as a circle!)

\begin{theorem} (\cite[Theorem 1.1]{cohn-kenyon-propp}
\label{thm:variational-principle}
Let $R^*$ be a region in $\bbR^2$ bounded by a piecewise smooth, simple
closed curve $\partial R^*$.  Let $h_b : \partial R^* \rightarrow \bbR$ be a
function which can
be extended to a function on $R^*$ with Lipschitz constant at most $2$
in the sup norm.  Let $f : R^* \rightarrow \bbR$ be the unique such
Lipschitz function
maximizing the entropy functional $\Ent(f)$,
subject to $f |_{\partial R^*} = h_b$.

Let $R$ be a lattice region that approximates $R^*$ when rescaled by a
factor of $1/n$, and whose normalized boundary height function
approximates $h_b$.  Then the normalized height function of a random
tiling of $R$ approximates $f$, with probability tending to $1$ as $n
\rightarrow \infty$.

\end{theorem}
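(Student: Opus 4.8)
This is the main theorem of~\cite{cohn-kenyon-propp}, so we only indicate the strategy. The plan is to deduce the probabilistic statement from a purely enumerative estimate. For a Lipschitz-$2$ function $g$ on $R^*$ with $g|_{\partial R^*}=h_b$, let $N_n(g,\epsilon)$ be the number of tilings of $R$ whose normalized height function stays within $\epsilon$ (in the sup norm) of $g$, and let $N_n$ be the total number of tilings of $R$. The heart of the matter is the asymptotic
\[
\frac{1}{n^2}\log N_n(g,\epsilon)\;=\;\Ent(g)+o(1),\qquad \Ent(g)=\int_{R^*}\LocEnt{g}\,dx\,dy,
\]
where the $o(1)$ vanishes as $n\to\infty$ and then $\epsilon\to0$, and $\LocEnt{g}$ is the local surface-tension (entropy) density for dominoes. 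Granting this, a uniformly random tiling of $R$ has normalized height function in the $\epsilon$-ball around $g$ with probability $N_n(g,\epsilon)/N_n$; since $\frac1{n^2}\log N_n\to\max_g\Ent(g)$, only balls around near-maximizers of $\Ent$ can carry probability bounded away from $0$, so the random height function concentrates on the maximizer $f$.

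First I would prove the lower bound by an explicit \emph{patching construction}: partition $R^*$ into squares of side $\delta$, on each of which $g$ agrees with an affine function of some slope $(s,t)$ up to an error $o(\delta)$; fill each square with a domino pattern of average slope $(s,t)$ and count the fillings. The number of tilings of a $\delta n\times\delta n$ region with nearly linear boundary height of slope $(s,t)$ is $\exp\!\big((\delta n)^2\,\ent(s,t)+o((\delta n)^2)\big)$; the existence of this thermodynamic limit is itself a subadditivity argument, since tilings of a large box restrict to tilings of its sub-boxes and boundary mismatches involve only lower-order many sites. Concatenating squares multiplies counts, hence adds logarithms; the $O(1/\delta)$ seam lines touch only $o(n^2)$ sites and so cost only $o(n^2)$ in the log-count; letting $\delta\to0$ turns the Riemann sum $\sum(\delta n)^2\ent(s,t)$ into $n^2\Ent(g)$.

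For the matching upper bound I would again cut $R^*$ into $\delta$-squares, but now bound the number of tilings of $R$ from above by the product, over squares, of the number of tilings of that square with its \emph{induced} boundary height; combining the thermodynamic-limit estimate with the concavity of $\ent$ bounds each factor by $\exp\!\big((\delta n)^2\,\ent(\bar s,\bar t)+o((\delta n)^2)\big)$, where $(\bar s,\bar t)$ is the average slope of $g$ on the square, which Riemann-sums again to $n^2\Ent(g)$. To pass from ``height function close to a prescribed $g$'' to ``height function close to its a priori unknown limit'', one takes a finite $\epsilon$-net of the set of admissible Lipschitz-$2$ functions---compact in the sup norm by Arzel\`a--Ascoli---and applies a union bound over the net.

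The remaining ingredients are the qualitative properties of the density $\ent$ and of the functional $\Ent$. The function $\ent(s,t)$ admits an explicit formula in terms of the Lobachevsky function; it is continuous on the closed Newton polygon of admissible slopes, vanishes on its boundary (where the tiling is frozen), and is \emph{strictly concave} in the interior. Hence $\Ent$ is strictly concave on the convex set of admissible profiles (nonempty by the hypothesis on $h_b$), so its maximizer---which exists by compactness and upper semicontinuity---is unique. I expect the main obstacle to be exactly this surface-tension analysis together with its uniformity: showing that the limit defining $\ent(s,t)$ exists and is continuous up to the boundary of the Newton polygon, controlling the patching estimates uniformly near the frozen boundary, and extracting strict concavity. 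This is precisely the point where the entropy functional interacts delicately with the arctic (frozen) region, and it is the mechanism that later produces a sharp limit shape such as the arctic circle---or, for the half-hexagon, the arctic parabola.
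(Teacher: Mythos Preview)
The paper does not prove this theorem; it merely quotes \cite[Theorem~1.1]{cohn-kenyon-propp} and then uses it as a black box (only the functional form~\eqref{eqn:entropy_of_region} and the uniqueness of the maximizer are invoked downstream, in Lemma~\ref{lem:subregion_height_function}). So there is no ``paper's own proof'' to compare your sketch against.

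On its own merits, your sketch is a fair outline of the Cohn--Kenyon--Propp argument: large-deviation counting via a patching lower bound, a product upper bound over mesoscopic squares, thermodynamic limit for $\ent(s,t)$, strict concavity, and Arzel\`a--Ascoli compactness. One point to tighten: in the upper bound you write that the number of tilings of $R$ is bounded above by the product over squares of the number of tilings of each square with its induced boundary height. Literally this is false, since dominoes can straddle the mesoscopic seams; what is actually bounded is the number of tilings \emph{whose normalized height lies in a fixed $\epsilon$-ball around $g$}, and one must first pin down the height along the seams (at a cost of only $o(n^2)$ choices) before the product bound applies. That refinement is exactly what makes the union bound over the $\epsilon$-net work. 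With that caveat, your identification of the delicate step---continuity and strict concavity of $\ent$ up to and on the boundary of the slope polygon---is accurate.
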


In the above theorem, $\Ent(f)$ is given by
\begin{equation}
\label{eqn:entropy_of_region}
\Ent(h) = \frac{1}{\area(R^*)}\iint_{R^*} \LocEnt{h} \, dx \,
dy,
\end{equation}
and $\LocEnt{h}$ is a certain explicit function of the gradient of the surface $h$; see~\cite{cohn-kenyon-propp} for further details.  For us, the most important point is that the asymptotic number of tilings depends only upon the asymptotic height function $g$, not upon the precise nature of the boundary of $R$; moreover this dependence is local.
As such, an immediate consequence of Theorem~\ref{thm:variational-principle} is the following: 

\begin{lemma}
\label{lem:subregion_height_function}
Let $S \subseteq R$ be regions.  Let $h_{\partial R}$ be a boundary height function on $\partial R$, and let $f:R \rightarrow \bbR$ be the entropy-maximizing asymptotic height function for $(R, h_{\partial R})$.  Let $g$ be the entropy-maximizing asymptotic height function for $(S, f|_{\partial S})$.  Then $g = f|_{S}$.
\end{lemma}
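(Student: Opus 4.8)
The plan is to exploit the fact that the entropy functional in~\eqref{eqn:entropy_of_region} is an integral of a local density $\LocEnt{\cdot}$, so that being the entropy maximizer is a local property and hence passes to subregions. Since the constant factor $1/\area(R)$ does not affect which function is the maximizer, $f$ is characterized as the unique Lipschitz-$2$ function on $R$ with $f|_{\partial R} = h_{\partial R}$ maximizing $\iint_R \LocEnt{f}\,dx\,dy$, and likewise $g$ is the unique Lipschitz-$2$ function on $S$ with $g|_{\partial S} = f|_{\partial S}$ maximizing $\iint_S \LocEnt{g}\,dx\,dy$. Note first that $f|_{\partial S}$ is a legitimate boundary height function for $S$: the function $f|_S$ is itself a Lipschitz-$2$ extension of it to $S$, so the hypotheses of Theorem~\ref{thm:variational-principle} are satisfied and $g$ exists and is unique.

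I would then argue by contradiction. Suppose $g \neq f|_S$. Since $f|_S$ is an admissible competitor for the problem on $(S,\, f|_{\partial S})$ but, by assumption, not the unique maximizer, we obtain the \emph{strict} inequality $\iint_S \LocEnt{g}\,dx\,dy > \iint_S \LocEnt{f}\,dx\,dy$. Now glue: define $\tilde f : R \to \bbR$ by $\tilde f = g$ on $S$ and $\tilde f = f$ on $R \setminus S$. Because $g$ and $f$ agree on $\partial S$, the function $\tilde f$ is well defined and continuous, and $\tilde f|_{\partial R} = f|_{\partial R} = h_{\partial R}$. Granting that the gluing preserves admissibility (discussed below), $\tilde f$ is a valid competitor for the problem on $(R,\, h_{\partial R})$, and since $\LocEnt{\tilde f}$ agrees a.e.\ with $\LocEnt{g}$ on $S$ and with $\LocEnt{f}$ on $R \setminus S$,
\[
\iint_R \LocEnt{\tilde f}\,dx\,dy
= \iint_S \LocEnt{g}\,dx\,dy + \iint_{R \setminus S} \LocEnt{f}\,dx\,dy
> \iint_R \LocEnt{f}\,dx\,dy ,
\]
so that $\Ent(\tilde f) > \Ent(f)$, contradicting the maximality of $f$. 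Hence $g = f|_S$.

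The main obstacle is the claim that $\tilde f$ is again Lipschitz with constant at most $2$, i.e.\ that two Lipschitz-$2$ functions may be glued along $\partial S$ without increasing the Lipschitz constant. The clean route is to pass to the infinitesimal form of the constraint: a Lipschitz function is differentiable almost everywhere, and being Lipschitz-$2$ in the sup norm is equivalent to the pointwise bound $|\partial_x h| + |\partial_y h| \le 2$ holding almost everywhere. This bound holds a.e.\ on $S$ (since $g$ satisfies it) and a.e.\ on $R \setminus S$ (since $f$ does), hence a.e.\ on $R$, because $\partial S$ is a null set for the piecewise-smooth regions at hand. Since $\tilde f$ is continuous and $R$ is connected (indeed quasiconvex, with geodesic distance comparable to Euclidean distance for these regions), integrating $\nabla \tilde f$ along paths recovers the global Lipschitz-$2$ bound. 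One should record the minor regularity checks — $\partial S$ of measure zero, $R$ quasiconvex — but these are harmless for the half-diamond and half-hexagon regions to which the lemma is applied, and together with $\tilde f|_{\partial R} = h_{\partial R}$ they supply exactly what the contradiction argument needs.
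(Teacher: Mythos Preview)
Your proof is correct and follows essentially the same route as the paper: glue $g$ on $S$ with $f$ on $R\setminus S$, use additivity of the entropy integral, and derive a contradiction with the maximality of $f$. If anything, you are more careful than the paper, which simply asserts that the glued function is ``a well-defined asymptotic height function because $g=f$ on the boundary of $S$'' without addressing the Lipschitz-$2$ constraint you discuss.
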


\begin{proof}
We will write $\Ent_S(.), \Ent_R(.), \Ent_{R\setminus S}(.)$ for the entropy functionals of the various regions, and $A_{S}, A_{R}, A_{R\setminus S}$ for their areas.  It is a direct consequence of~\eqref{eqn:entropy_of_region} that
\[
A_R \Ent_R(f)  = A_S \Ent_S(f) + A_{R\setminus S} \Ent_{R \setminus S}(f).
\]
Define a height function $g^*$ on $R$ as follows:
\begin{equation}
g^{*}(x) = 
\begin{cases}
g(x) & \text{ if }x \in S \\
f(x) & \text{ if }x \in R \setminus S
\end{cases}
\end{equation}
Observe that this function is a well-defined asymptotic height function because $g = f$ on the boundary of $S$.
If $\Ent_S(f|_{\partial S))} < \Ent_S(g)$, then $\Ent_R f < \Ent_R g^*$, contradicting the fact that $f$ is the entropy maximizer on $R$.  Thus $\Ent_S(f|_{\partial S))} = \Ent_S(g)$, and so $g = f|_{S}$ is the unique entropy maximizer on $S$. 
\end{proof}

\begin{corollary}
The limit shape of the Aztec half-diamond is the restriction of $G$ (from Theorem~\ref{thm:romik_height_function}) to $y<\frac{1}{2}$.
\end{corollary}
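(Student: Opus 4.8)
The plan is to apply Lemma~\ref{lem:subregion_height_function} together with the variational principle (Theorem~\ref{thm:variational-principle}), taking $R$ to be the rescaled limiting region of the Aztec diamond (the square $[0,1]^2$, in the coordinates of Theorem~\ref{thm:romik_height_function}), $S = R \cap \{y \le \tfrac12\}$ the rescaled limiting region of the Aztec half-diamond, and $f = G$. The first thing to record is that $G$ really is the entropy-maximizing asymptotic height function for $(R, h_{\partial R})$, where $h_{\partial R}$ is the rescaled boundary height function of the Aztec diamond: Theorem~\ref{thm:romik_height_function} says the rescaled random height function of $A_n$ converges in probability to $G$, while Theorem~\ref{thm:variational-principle} says it converges in probability to the entropy maximizer for $(R,h_{\partial R})$; since both limits are deterministic, they coincide.

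Next I would check that the rescaled boundary height functions of the half-diamonds $H_n$ converge to $G|_{\partial S}$. The boundary $\partial S$ splits into the portion it shares with $\partial R$ and the cut $\{y = \tfrac12\} \cap R$. On the shared portion there is nothing to do: the discrete region $H_n$ literally shares that part of its boundary, and the surrounding square grid, with $A_n$, so the two boundary height functions agree there up to an additive constant, and hence after rescaling $H_n$'s boundary height converges there to $G$. On the cut one computes directly from the staircase $y = 2\lfloor x/2\rfloor$ (resp.\ its odd-$n$ analogue): the increments of the boundary height function cancel over each period of this staircase, so it has bounded oscillation, and its rescaled limit is therefore the constant equal to its value at the staircase's endpoint. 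That endpoint lies on $\partial R$, at the point where the arctic circle touches the side of the square, and by~\eqref{eqn:flat_in_the_middle} the value of $G$ there is $\tfrac12$; by~\eqref{eqn:flat_in_the_middle} again, $G \equiv \tfrac12$ on the whole segment $\{y = \tfrac12\}$. Hence the rescaled boundary height of $H_n$ along the cut converges to $G|_{\{y=1/2\}}$, and altogether to $G|_{\partial S}$.

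With these two facts in hand the corollary follows quickly. Lemma~\ref{lem:subregion_height_function}, applied to $S \subseteq R$ with $f = G$, gives that the entropy-maximizing asymptotic height function for $(S, G|_{\partial S})$ is $G|_S$. Then Theorem~\ref{thm:variational-principle}, applied to the sequence $H_n$ (which rescales to $S$, and whose rescaled boundary height functions converge to $G|_{\partial S}$ by the previous paragraph), shows that the rescaled height function of a uniformly random tiling of $H_n$ converges in probability to this entropy maximizer, namely $G|_S = G|_{\{y < \tfrac12\}}$.

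I expect the main obstacle to be the verification along the cut: one has to check carefully that the irregular staircase boundary of $H_n$ — the ``erratic bottom boundary'' mentioned in the introduction — has a rescaled boundary height function converging to the constant $\tfrac12$, and that the staircase's endpoint lands exactly at the point of $\partial R$ where $G = \tfrac12$, so that the two descriptions of the limiting boundary data are consistent. Everything else is bookkeeping: verifying the hypotheses of Theorem~\ref{thm:variational-principle} and Lemma~\ref{lem:subregion_height_function} (piecewise-smooth boundary of $S$, and Lipschitz-$2$ extendability of $G|_{\partial S}$, which it inherits from $G$ on $R$), and keeping track of the affine change of coordinates that turns the Aztec diamond into the square $[0,1]^2$ and the slope-one cut of $H_n$ into the horizontal segment $\{y = \tfrac12\}$.
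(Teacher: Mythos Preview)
Your proposal is correct and follows essentially the same approach as the paper: set $R=[0,1]^2$, $S=[0,1]\times[0,\tfrac12]$, $f=G$, use~\eqref{eqn:flat_in_the_middle} to identify the limiting boundary data along the cut, and apply Lemma~\ref{lem:subregion_height_function}. The paper's proof is terser and does not spell out the verifications you outline (that $G$ is indeed the entropy maximizer on $R$, and that the rescaled staircase boundary height of $H_n$ converges to the constant $\tfrac12$), but the structure is identical.
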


\begin{proof}
Let 
\begin{align*}
R&=[0,1] \times [0,1] &
f(x,y)&=G[x,y] \\
S&=[0,1] \times [0,1/2] &
g|_{\partial S} = f|_{\partial S};
\end{align*}
That is, $R$ and $f$ describe the limit height function for the Aztec Diamond, and $S$ is the limiting region for the Aztec Half-Diamond, and $g|_{\partial S}$ is its limiting height function by Equation~\eqref{eqn:flat_in_the_middle}.  Lemma~\ref{lem:subregion_height_function} now says that the limiting height function for the Aztec Half-Diamond $g$ coincides with $f_{S}$.
\end{proof}

\begin{figure}

\begin{center}
\includegraphics[width=5.5in]{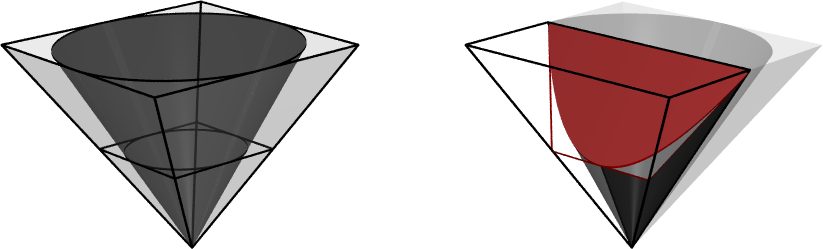}
\end{center}
\caption{Space-time diagram for domino shuffling.  
Left image: the Aztec diamonds and their arctic circles.  
Right image: the half-hexagon and its arctic parabola.  
\label{fig:cone}}
\end{figure}

\begin{corollary}
The limit shape of the half-hexagon is a parabola.
\end{corollary}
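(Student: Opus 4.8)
The plan is to transport the limit shape of the Aztec half-diamond, obtained in the corollary immediately above, to the half-hexagon by means of the space-time shear that identifies the two domino shuffles --- see Section~\ref{sec:aztec}, equation~\eqref{eqn:capitalx}, Section~\ref{sec:half aztec shuffle}, and Figure~\ref{fig:cone}. The first step is to put that identification in an asymptotic form. Algorithm~\ref{alg:hh_shuffle} implements, in the variables of~\eqref{eqn:capitalx}, precisely the Aztec half-diamond shuffle of Section~\ref{sec:half aztec shuffle} --- the equally-spaced bottom row appended by Algorithm~\ref{alg:hh_shuffle} playing the role of the forced centre-line configuration $x^{m+1}_i(2m+1)=2i$. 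Since~\eqref{eqn:capitalx} shifts the time coordinate by an amount equal to the level index, with coefficient exactly $1$, the consequence I want is: row $\ell$ of a uniformly random element of $\HH(n)$ has the same law as level $\ell$ of a uniformly random tiling of the Aztec half-diamond $H_N$ of order $N=n+\ell$.

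Next I would read off the liquid region of $H_N$ from the preceding corollary, which identifies the rescaled limiting height function of $H_N$ with $G$ restricted to $y<\tfrac12$. The frozen regions are the two facets of $G$, so the liquid region, rescaled by $1/N$, is $\{\,\tfrac{1-2\sqrt{y(1-y)}}{2}<x<\tfrac{1+2\sqrt{y(1-y)}}{2},\ 0\le y<\tfrac12\,\}$ --- the half of the arctic circle of $A_N$ lying in $H_N$; equivalently, in un-rescaled coordinates, a position $x$ on level $\ell$ of $H_N$ is liquid iff $(2x-N)^2<4\ell(N-\ell)$. (Upgrading the height-function convergence of that corollary to this statement about the boundary of the liquid region is routine: the liquid region is the complement of the facets of the limiting surface.) Substituting $N=n+\ell$, so that $N-\ell=n$, and rescaling by $1/n$ with $u=\ell/n$, $v=x/n$, the boundary of the liquid region of the half-hexagon of order $n$ converges to
\[
\bigl(2v-1-u\bigr)^2 = 4u.
\]
This is the image of the parabola $Y^2=4X$ under the invertible affine substitution $X=u$, $Y=2v-1-u$, hence a genuine (non-degenerate) parabola, which is the assertion of the corollary.

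It is worth noting exactly where the parabola comes from. If the shear~\eqref{eqn:capitalx} shifted time by $c\ell$ for some constant $c\ne1$, the identical computation would instead produce $(2v-1-cu)^2=4u\bigl(1+(c-1)u\bigr)$, whose discriminant is proportional to $c-1$: an ellipse when $c<1$, a hyperbola when $c>1$. The domino shuffle produces precisely $c=1$, the borderline parabolic case; this is the whole phenomenon.

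The step I expect to be the real work is the first one: establishing ``row $\ell$ of $\HH(n)$ corresponds to level $\ell$ of $H_{n+\ell}$'' with every constant correct. This means carefully composing the change of variables~\eqref{eqn:capitalx} with the translation between Algorithm~\ref{alg:hh_shuffle} and the Aztec half-diamond shuffle of Section~\ref{sec:half aztec shuffle}, keeping track of all the off-by-one shifts, and checking that the two algorithms' differing initial and boundary conditions genuinely match under the shear. Once that bookkeeping is done, the remainder --- reading off the liquid region and recognising the resulting conic --- is short.
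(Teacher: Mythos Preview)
Your proposal is correct and follows essentially the same approach as the paper: both identify the half-hexagon, via the space-time shear~\eqref{eqn:capitalx}, with a diagonal slice through the Aztec (half-)diamond shuffling process, and then read off that the boundary of the liquid region is a parabola. The only difference is one of presentation: the paper argues geometrically (the arctic circles trace out a cone in space-time, and a plane parallel to a side of a cone meets it in a parabola), whereas you carry out the equivalent computation explicitly, derive the equation $(2v-1-u)^2=4u$, and verify it is a parabola by an affine change of variables --- your discriminant remark about $c\neq 1$ is exactly the algebraic counterpart of the paper's ``parallel to the side of the cone'' observation.
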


\begin{proof}
One checks that the particle process defined by the half-hexagon domino shuffle has the Aztec Half-Diamonds as its constant-time slices (see Figure~\ref{fig:cone}).  Indeed, the behaviour of Algorithm~\ref{alg:hh_shuffle} coincides with the equations~\ref{eqn:capitalx}, except that the initial condition is different;  it straightforward to check, that this initial condition is enforced both by Algorithm~\ref{alg:hh_shuffle} and by Propp's generalized domino shuffling~\cite{Pro03} applied to the half-hexagon; see Section~\ref{sec:half aztec shuffle}.

The space-time diagram of domino shuffling traces out a cone inscribed in a square pyramid.  The half-hexagons' perfect matchings correspond to slices parallel to one of the sides of this cone; the intersection is a parabola.

\end{proof}

We remark that one can in fact write down the limiting height function in this way as well; it is precisely the image of the height function of Theorem~\ref{thm:romik_height_function} under the affine transformation which takes the rectangle $[0,1] \times [0,1/2]$ to the trapezoid with corners $\{(\pm1,0), (\pm(1/2), \sqrt{3}/2)\}$.  Moreover, domino shuffling on the Aztec half-diamond has a reasonably nice description: it turns out to coincide with a particular instance of Propp's Generalized Domino shuffling~\cite{Pro03} up to gauge transformation.

\section{Future work and open problems}

The reader will doubtless have noticed that this paper is a study of the phenomenology of one of the nicest, most special cases of the dimer model on planar bipartite graphs; naturally one should try to push the analysis to more general situations.  For instance, a few other domino shuffles have been discovered on a variety of statistical mechanical models (see, for example,~\cite{CoYo10, BoGo09}) and it would be very interesting to study different space-time sections of them.  

Taking slices through the cone in Figure~\ref{fig:cone} with varying slopes should yield a family of particle processes whose limit shapes are conic sections.  Though this is a fairly trivial observation, we note that relatively few instances of the dimer model have known low-degree algebraic limit shapes.

There is a general framework~\cite{borodin-ferrari} for studying dynamics of the sort we have described in this paper, geared in particular to asymptotics.  It appears that our model fits into this framework, but we have yet to determine whether the computations are tractable.

We conclude with the following remark: the problem which Jonathan Novak posed to us --- give a bijection between $\AD(n)$ and $\HH(n)$ --- is still open, and we would be very happy to see a solution!  

\clearpage
\bibliography{references}

\end{document}